\newtheorem{theorem}{Theorem}[section]
\newtheorem{lemma}[theorem]{Lemma}
\newtheorem{prop}[theorem]{Proposition}
\newtheorem{question}[theorem]{Question}
\theoremstyle{definition}
\newtheorem{definition}[theorem]{Definition}
\theoremstyle{remark}
\newtheorem{remark}[theorem]{Remark}
\numberwithin{equation}{section}
\begin{document}

\title{Twisting quasi-alternating links}
\author{Abhijit Champanerkar}
\address{Department of Mathematics, College of Staten Island, City University of New York}
\email{abhijit@math.csi.cuny.edu}
\thanks{The first author is supported by NSF grant DMS-0844485.}

\author{Ilya Kofman}
\address{Department of Mathematics, College of Staten Island, City University of New York}
\email{ikofman@math.csi.cuny.edu}
\thanks{The second author is supported by NSF grant DMS-0456227 and a PSC-CUNY grant.}

\subjclass[2000]{Primary 57M25}
\keywords{Khovanov homology, knot Floer homology, pretzel link}
\date{\today}

\begin{abstract}
\noindent
Quasi-alternating links are homologically thin for both Khovanov
homology and knot Floer homology.  We show that every
quasi-alternating link gives rise to an infinite family of
quasi-alternating links obtained by replacing a crossing with an
alternating rational tangle.  Consequently, we show that many pretzel
links are quasi-alternating, and we determine the thickness of
Khovanov homology for ``most'' pretzel links with arbitrarily many
strands.
\end{abstract}
\commby{Daniel Ruberman}
\maketitle

\section{Introduction}
In \cite{CM_PO}, quasi-alternating links were shown to be homologically
thin for both Khovanov homology and knot Floer homology.  It is the
most inclusive known description of homologically thin links.  However, the
recursive definition makes it difficult to decide whether a given knot
or link is quasi-alternating.

\begin{definition}\label{qadef}(\cite{OzSz_double_covers})
The set $\mathcal{Q}$ of quasi-alternating links is the smallest set of links
satisfying the following properties:
\begin{itemize}
\item The unknot is in $\mathcal{Q}$.
\item If the link $\mathcal L$ has a diagram $L$ with a crossing $c$ such that 
\begin{enumerate}
\item both smoothings of $c$, $L_0$ and $L_{\infty}$ as in Figure \ref{skeinfig}, are in $\mathcal{Q}$;
\item $\det(L)=\det(L_0)+\det(L_{\infty})$;
\end{enumerate}
$\!$then $\mathcal L$ is in $\mathcal{Q}$.
\end{itemize}
We will say that $c$ as above is a {\em quasi-alternating crossing} of
$L$, and that $L$ is {\em quasi-alternating at} $c$.
\end{definition}

An oriented $3$--manifold $Y$ is called an {\em {\rm L}--space} if $b_1(Y)=0$
and $|H_1(Y;\,\mathbb Z)|=rk\widehat{HF}(Y)$, where $\widehat{HF}$ denotes the
Heegaard-Floer homology.  For a link $\mathcal L$ in $S^3\!$, let $\Sigma(\mathcal L)$ denote its branched double cover.
The following property is the reason for interest in quasi-alternating links:
\begin{prop}[Proposition 3.3 \cite{OzSz_double_covers}]\label{Lprop}
If $\mathcal L$ is a quasi-alternating link, then $\Sigma(\mathcal L)$ is an {\rm L}--space.
\end{prop}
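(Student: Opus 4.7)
The plan is to prove this by induction on the recursive definition of $\mathcal Q$. For the base case, the branched double cover of the unknot is $S^3$, which is an L--space since $\widehat{HF}(S^3)\cong\mathbb Z$ and $|H_1(S^3;\mathbb Z)|=1$.

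For the inductive step, assume $\mathcal L$ is quasi-alternating at a crossing $c$, so $L_0$ and $L_\infty$ are in $\mathcal Q$ and $\det(\mathcal L)=\det(L_0)+\det(L_\infty)$. By the inductive hypothesis, $\Sigma(L_0)$ and $\Sigma(L_\infty)$ are L--spaces. The key tool is the unoriented skein exact triangle of Ozsv\'ath--Szab\'o for branched double covers: the crossing arc at $c$ lifts to a knot $K\subset \Sigma(\mathcal L)$ whose $0$-- and $\infty$--surgeries recover $\Sigma(L_0)$ and $\Sigma(L_\infty)$, and the corresponding surgery triangle gives
\[ \mathrm{rk}\,\widehat{HF}(\Sigma(\mathcal L)) \le \mathrm{rk}\,\widehat{HF}(\Sigma(L_0)) + \mathrm{rk}\,\widehat{HF}(\Sigma(L_\infty)). \]
A short side induction shows $\det(\mathcal L)>0$ for all $\mathcal L\in\mathcal Q$ (true for the unknot, preserved by additivity), so each $\Sigma(L_\bullet)$ is a rational homology sphere with $|H_1(\Sigma(L_\bullet);\mathbb Z)|=\det(L_\bullet)$. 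Since $L_0$ and $L_\infty$ are L--spaces, combining these facts with condition~(2) of Definition~\ref{qadef} yields
\[ \mathrm{rk}\,\widehat{HF}(\Sigma(\mathcal L)) \le \det(L_0)+\det(L_\infty)=\det(\mathcal L)=|H_1(\Sigma(\mathcal L);\mathbb Z)|. \]

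To close the argument, I would invoke the standard lower bound $\mathrm{rk}\,\widehat{HF}(Y)\ge |H_1(Y;\mathbb Z)|$ for any rational homology sphere $Y$, which follows from the decomposition of $\widehat{HF}$ along $\mathrm{Spin}^c$ structures (each summand is nontrivial, and there are $|H_1(Y;\mathbb Z)|$ such structures). Equality must then hold in the display above, so $\Sigma(\mathcal L)$ is an L--space.

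The main obstacle, and the place where most care is needed, is the geometric identification underlying the skein triangle: verifying that the three double covers truly sit in a surgery triangle on a single knot (the preimage of the crossing arc), and that the roles of $L_0$ and $L_\infty$ in the triangle match condition~(2) so that the rank inequality lines up with the additivity of determinants. Once this alignment is established, the induction runs cleanly.
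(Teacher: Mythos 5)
The paper does not prove this statement at all: it is quoted verbatim as Proposition~3.3 of \cite{OzSz_double_covers}, so there is no in-paper proof to compare against. Your argument is, in substance, the proof given in that original source, and it is correct: induction over the recursive structure of $\mathcal{Q}$, the surgery exact triangle for branched double covers giving $\mathrm{rk}\,\widehat{HF}(\Sigma(\mathcal L)) \le \mathrm{rk}\,\widehat{HF}(\Sigma(L_0)) + \mathrm{rk}\,\widehat{HF}(\Sigma(L_\infty))$, the identification $|H_1(\Sigma(L);\mathbb Z)|=\det(L)$, and the universal lower bound $\mathrm{rk}\,\widehat{HF}(Y)\ge|H_1(Y;\mathbb Z)|$ (which rests on $\chi(\widehat{HF}(Y,\mathfrak s))=\pm1$ for each $\mathrm{Spin}^c$ structure when $b_1(Y)=0$ --- you should cite that Euler characteristic fact rather than just asserting each summand is nontrivial). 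The one imprecision is your labelling of the two fillings as ``$0$-- and $\infty$--surgeries'': the correct statement is that $\Sigma(\mathcal L)$, $\Sigma(L_0)$, $\Sigma(L_\infty)$ are the three fillings of the knot complement along a triad of slopes at pairwise distance one, which is exactly what the exact triangle requires; with that adjustment the alignment with the determinant identity you worry about in your last paragraph is the content of Ozsv\'ath--Szab\'o's construction and does go through.
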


In this note, we show that a quasi-alternating crossing can be
replaced by an alternating rational tangle to obtain another
quasi-alternating link (Theorem \ref{maintheorem}).  Thus, the set of
quasi-alternating links includes many non-trivial infinite families.

For pretzel links, their Khovanov homology and knot Floer homology
have been computed only for $3$--strand pretzel links (see
\cite{OSz_2004, Eftekhary, Suzuki}).  By repeatedly applying Theorem
\ref{maintheorem}, we show that many pretzel links with arbitrarily many
strands are quasi-alternating (Theorem \ref{pretzeltheorem}(1)).
Therefore, their respective homologies can be computed from the
signature and the Jones and Alexander polynomials\footnote{A closed formula
for the Kauffman bracket of arbitrary pretzel links is given in
\cite{cjp}; for the Alexander polynomial, see \cite{Hironaka} and references
therein.}. Most other pretzel links are not quasi-alternating since 
their Khovanov homology lies on exactly three diagonals 
(Theorem \ref{pretzeltheorem}(2)).  Thus,
Theorem \ref{pretzeltheorem} gives the thickness of Khovanov
homology for ``most'' pretzel links with arbitrarily many strands.

In Section \ref{10crossqa}, we apply Theorem \ref{maintheorem} to complete
the classification of quasi-alternating knots up to 10 crossings.

Finally, the Turaev genus plus two bounds the homological thickness
for both Khovanov homology and knot Floer homology \cite{Manturov, dkh,
  Lowrance}.  The Turaev genus is preserved after inserting a rational
tangle as in Theorem \ref{maintheorem}. 
We do not know of any quasi-alternating links with Turaev genus greater than one.

\begin{question}
Do there exist quasi-alternating links with arbitrary Turaev genus?
\end{question}

\section{Twisting quasi-alternating links}
Let $c$ be a quasi-alternating crossing of a link diagram $L$, as in Definition \ref{qadef}.
We consider $c$ as a $2$-tangle with marked endpoints.  Using Conway's notation for
rational tangles, let $\varepsilon(c)=\pm 1$, according to whether the
overstrand has positive or negative slope.  We will say that a
rational $2$-tangle $\tau=C(a_1,\ldots,a_m)$ {\em extends} $c$ if
$\tau$ contains $c$, and for all $i,\ \varepsilon(c)\cdot a_i \geq 1$.
In particular, $\tau$ is an alternating rational tangle.

\begin{theorem}\label{maintheorem}
  If $L$ is a quasi-alternating link diagram, let $L'$ be obtained by
  replacing any quasi-alternating crossing $c$ with an alternating
  rational tangle that extends $c$. Then $L'$ is quasi-alternating.
\end{theorem}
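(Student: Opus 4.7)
The plan is to induct on $n(\tau) := \sum_{i=1}^{m} |a_i|$, the total crossing number of $\tau = C(a_1,\dots,a_m)$. After multiplying through by $\varepsilon(c)$, I may assume $\varepsilon(c) = +1$ and each $a_i \ge 1$. The base case $n(\tau) = 1$ is trivial, since then $\tau = c$ and $L' = L$ is quasi-alternating by hypothesis.

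For the inductive step, I would single out a crossing $c'$ of $\tau$ in the innermost twist region (the block of $a_m$ parallel crossings) and show that $L'$ is quasi-alternating at $c'$. Smoothing $c'$ in $L'$ produces two simpler links, each obtained by replacing $c$ in $L$ with either (i) a shorter alternating rational tangle that still extends $c$ --- namely $C(a_1,\dots,a_m-1)$ or $C(a_1,\dots,a_{m-1})$, or similar --- or (ii) the trivial tangle $C(0)$ or $C(\infty)$, yielding $L_0$ or $L_\infty$. Option (i) is quasi-alternating by the inductive hypothesis, and option (ii) is quasi-alternating by assumption on the crossing $c$ of $L$. This settles condition (1) of Definition \ref{qadef} for $c'$.

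Condition (2), the determinant identity $\det(L') = \det(L'_0) + \det(L'_\infty)$ at $c'$, is where the real work lies. I would appeal to the standard linearity of the determinant under rational-tangle insertion: if a rational tangle of fraction $p/q$ is inserted at a fixed $2$-tangle site of a link, the resulting determinant equals $|p\,d_\infty + q\,d_0|$, where $d_0 = \det(L_0)$ and $d_\infty = \det(L_\infty)$ are the smoothing determinants at that site. The extension hypothesis $\varepsilon(c)\cdot a_i \ge 1$ forces $p/q > 0$ and makes the fractions of the two sub-tangles obtained by smoothing $c'$ also positive, so no absolute-value cancellation occurs. The target identity then reduces to a short continued-fraction computation relating the fraction of $\tau$ to those of its two $c'$-smoothings, combined with the QA identity $\det(L) = d_0 + d_\infty$ at $c$. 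The main obstacle is precisely this determinant bookkeeping --- verifying that the alternating/extension hypothesis genuinely prevents sign cancellation in the linear-combination formula at every stage of the induction --- while the combinatorial skeleton of the induction itself is routine.
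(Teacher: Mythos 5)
Your skeleton---induct on the crossing number of $\tau$, exhibit one crossing $c'$ whose two smoothings are covered by the inductive hypothesis or by the hypothesis on $c$, and verify $\det(L')=\det(L'_0)+\det(L'_\infty)$---is essentially the paper's strategy. The paper, however, builds $\tau$ from the inside out, one twist region at a time ($L\to L^{-a_m}\to (L^{-a_m})^{a_{m-1}}\to\cdots$), so that at every stage the two smoothings of a newly inserted crossing are literally the previous stage and the fixed link $L_\infty$; your top-down smoothing of an innermost-block crossing instead forces you to identify the collapsed tangle (e.g.\ when $a_m=1$ the $\infty$-type smoothing yields $C(a_1,\dots,a_{m-1},0)$) and check that it is still an alternating tangle extending $c$ so the inductive hypothesis applies. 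That is repairable bookkeeping, and your "or similar" hedges over it.

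The genuine gap is in the determinant step, exactly where you say the real work lies. The formula $\det=|p\,d_\infty+q\,d_0|$ with $d_0,d_\infty$ the honest (nonnegative) determinants is false in general: the correct statement involves \emph{signed} quantities $\delta_0,\delta_\infty$ with $|\delta_0|=d_0$, $|\delta_\infty|=d_\infty$, and cancellation can occur between them. Positivity of the tangle fraction $p/q$, which you cite as the reason "no absolute-value cancellation occurs," only controls the signs of the coefficients $p,q$; it says nothing about the relative sign of $\delta_0$ and $\delta_\infty$, which is where cancellation would actually happen. The missing input is that the quasi-alternating condition at $c$, namely $|\delta_0+\delta_\infty|=|\delta_0|+|\delta_\infty|$, forces $\delta_0\cdot\delta_\infty\ge 0$. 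The paper's Lemma \ref{det_st} (the spanning-tree expansion of $\det$ over the Tait graph, graded by the number of positive edges) exists precisely to give these signed quantities a coherent definition, namely $x=\sum_v(-1)^v s_{v-1}(L_0)$ and $y=\sum_v(-1)^v s_v(L_\infty)$, to deduce $xy\ge 0$ from the QA hypothesis, and only then to conclude $\det(L^n)=\det(L_0)+(n+1)\det(L_\infty)$ with no cancellation. If you want to substitute the tangle-fraction formula for that lemma, you must (i) prove its signed version for a fixed convention and (ii) derive the sign coherence of $\delta_0,\delta_\infty$ from the QA condition at $c$, not from the positivity of the $a_i$. As written, your proposal asserts the conclusion of the hard step rather than proving it.
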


We start with some background needed for the proof.  For any connected
link diagram $L$, we can associate a connected graph $G(L)$, called the
Tait graph of $L$, by checkerboard coloring complementary regions of
$L$, assigning a vertex to every shaded region, an edge to every
crossing and a $\pm$ sign to every edge as follows:
$$ \includegraphics[height=1cm]{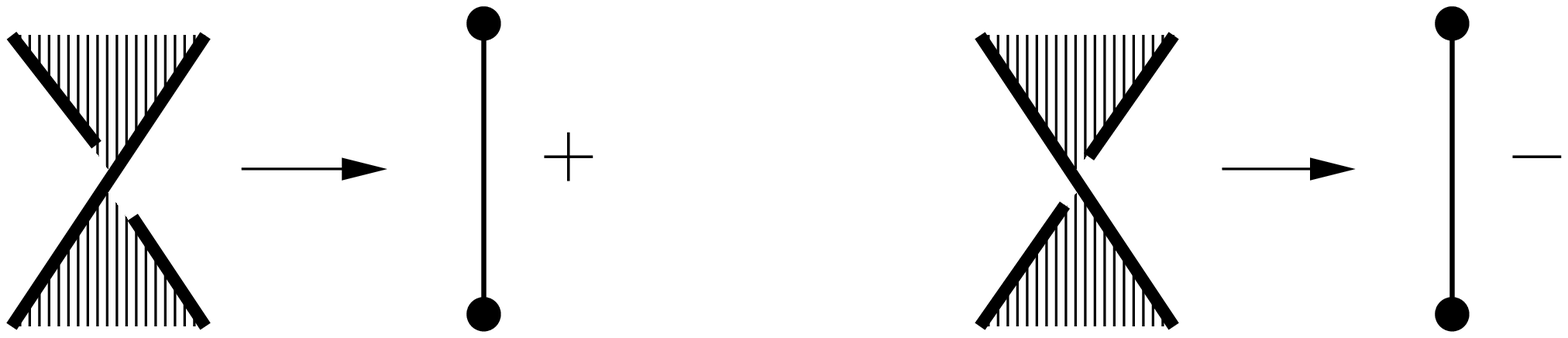}.$$
The signs are all equal if and only if $L$ is alternating.
\begin{lemma}\label{det_st}
For any spanning tree $T$ of $G(L)$, let $v(T)$ be the number
of positive edges in $T$.  Let $s_v(L)=\#\{$spanning trees $T$ of $G(L)\ |\ v(T)=v\}$.  Then
$$ \det(L) = \left|\sum_v (-1)^v\, s_v(L) \right|. $$
\end{lemma}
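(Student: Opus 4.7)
The plan is to combine the classical Goeritz-matrix description of $\det(L)$ with Kirchhoff's weighted matrix-tree theorem. I would form the signed Laplacian $M$ of the Tait graph $G(L)$ by assigning weight $\epsilon(e) \in \{\pm 1\}$ to each edge, setting $M_{ii} = \deg(v_i)$ and $M_{ij} = -\sum_{e:\,i \sim j} \epsilon(e)$ for $i \neq j$. Deleting any one row and the corresponding column produces (up to a harmless overall sign) the Goeritz matrix of $L$, whose determinant equals $\pm\det(L)$ by the classical theorem of Goeritz.

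Next I would apply the weighted form of Kirchhoff's matrix-tree theorem to expand this cofactor as
\[
\sum_{T} \prod_{e \in T} \epsilon(e),
\]
summed over spanning trees $T$ of $G(L)$. Since every spanning tree contains exactly $|V(G(L))|-1$ edges, the product $\prod_{e \in T}\epsilon(e)$ equals $(-1)^{|V|-1-v(T)}$. Pulling out the global sign $(-1)^{|V|-1}$ and grouping the spanning trees by the value of $v(T)$, the cofactor becomes $\pm\sum_v (-1)^v s_v(L)$, and taking absolute values gives the lemma.

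The only step requiring real care is the Goeritz identity $\det(L) = |\det(G)|$ itself: its classical derivation comes with a correction factor (in the spirit of Gordon--Litherland) depending on crossing ``types,'' but since the statement involves only the absolute value, this correction is irrelevant here. A self-contained alternative, should one wish to avoid invoking the Goeritz identity, is to expand the Kauffman bracket at $A = e^{i\pi/4}$: the loop coefficient $-A^2 - A^{-2}$ vanishes there, so only states whose associated trace graph is a spanning tree of $G(L)$ survive, and tracking the $A^{\pm 1}$ contribution at each crossing against $\epsilon(e)$ reproduces the same signed sum. Either route leads to the same conclusion.
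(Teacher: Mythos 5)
Your argument is correct, but it takes a genuinely different route from the paper. The paper deduces the lemma from Thistlethwaite's spanning-tree expansion of the Jones polynomial (in the refined form with the grading $u(T)$ from the authors' earlier work), evaluated at $t=-1$ via $\det(L)=|V_L(-1)|$; each tree then contributes $(-1)^{v(T)}$ after the $u(T)$-dependence cancels. You instead identify the Goeritz matrix with a reduced signed Laplacian of the Tait graph and apply the weighted matrix-tree theorem, which is a polynomial identity in the edge weights and so is valid for weights in $\{\pm 1\}$; the count $\prod_{e\in T}\epsilon(e)=(-1)^{|V|-1-v(T)}$ then gives the signed sum up to a global sign that the absolute value kills. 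This is arguably more elementary and self-contained --- it never mentions the Jones polynomial and makes the appearance of spanning trees transparent via Kirchhoff --- while the paper's route has the advantage of reusing machinery (the spanning-tree model of Khovanov homology) already central to the rest of the argument. Two small points of care in your write-up: the diagonal entries of your matrix $M$ must be the \emph{signed} degrees $\sum_{e\ni v_i}\epsilon(e)$ (with loop edges discarded), not the combinatorial degrees $\deg(v_i)$, or the cofactor will not equal $\sum_T\prod_{e\in T}\epsilon(e)$; and you should note that the statement holds vacuously in the degenerate direction, since $\det(G)=0$ exactly when $H_1(\Sigma(L))$ is infinite, i.e.\ $\det(L)=0$. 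Finally, your proposed ``self-contained alternative'' via the Kauffman bracket at $A=e^{i\pi/4}$ (where $t=A^{-4}=-1$ and the loop value vanishes, leaving only spanning-tree states) is in essence the paper's own proof, so you have effectively located both arguments.
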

\begin{proof} 
Thistlethwaite \cite{thistlethwaite} gave an expansion of the
Jones polynomial $V_L(t)$ in terms of spanning trees $T$ of $G(L)$.
Using this expansion, in \cite{KHshort} for any spanning tree $T$, we defined a grading $u(T)$, such that
$$V_L(t) = (-1)^w\, t^{(3w+k)/4}\sum_{T \subset G} (-1)^{u(T)}\, t^{u(T) -v(T)}$$
where $w$ is the writhe of $L$, and $k$ is a constant that depends on $G$ 
(see the proof of Proposition 2 \cite{KHshort}).  
Thus,
$$ \det(L) = |V_L(-1)| = \left| \sum_{T \subset G} (-1)^{u(T)} (-1)^{u(T) -v(T)}\right| = \left|\sum_{v\geq 0} (-1)^v\, s_v(L) \right|.$$
\end{proof}

\begin{proof}{\em (Theorem \ref{maintheorem})\ } Let $L$ be a
quasi-alternating diagram at crossing $c$.  We may assume that
$\varepsilon(c)=1$ by rotating $L$ as needed.  For $n\in\mathbb Z$, let $L^n$ denote
the link diagram with $|n|$ additional crossings inserted at $c$,
which are vertical positive half-twists if $n>0$, and horizontal
negative half-twists if $n<0$.  The cases $n=\pm 2$ are shown in
Figure \ref{skeinfig}.  

\begin{figure}[t]
  \begin{center}
    \includegraphics[height=1cm]{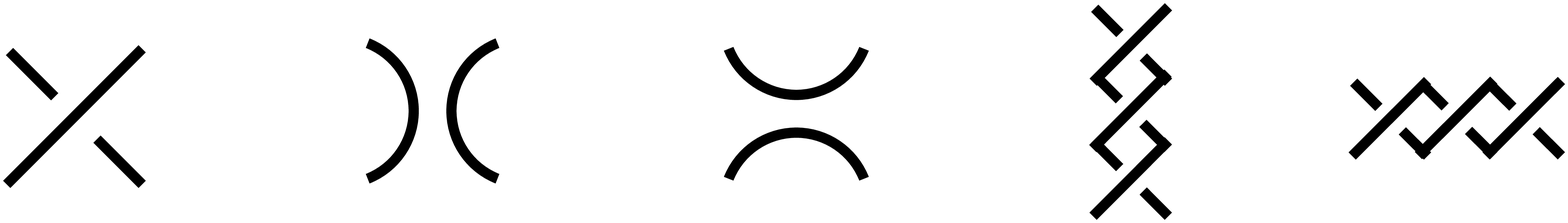}
  \end{center}
  \caption{Crossing $c$ of $L$, its smoothings $L_0$ and $L_{\infty}$,
    $L^2$ and $L^{-2}$.}
  \label{skeinfig}
\end{figure}

We first show that $L^n$ is quasi-alternating at any inserted
crossing.
Suppose $n\geq 0$.  We checkerboard color $L$ such that the edge $e$
in $G(L)$ that corresponds to $c$ is positive.  With the induced
checkerboard coloring, the Tait graph $G(L_0)$ has the edge $e$
contracted; $G(L_{\infty})$ has the edge $e$ deleted; and $G(L^n)$ has
$n$ additional vertices on $e$, dividing it into $n+1$ positive edges
$\{e_0,\ldots,e_n\}$.

For every spanning tree $T$ of $G(L)$ such that $e\in T$,
there is a unique spanning tree $T'$ of $G(L_0)$, and a unique spanning 
tree $T''$ of $G(L^n)$ such that $\{e_0,\ldots,e_n\}\subset T''$, then   
$v(T'')=v(T)+n=v(T')+1+n$. 
For every spanning tree $T$ of $G(L)$ such that $e\notin T$, 
there is a unique spanning tree $T'$ of $G(L_{\infty})$, and
there are spanning trees $T_0,\ldots,T_n$ of $G(L^n)$ such that
$e_i\notin T_i$, then for $0\leq i\leq n$, $v(T_i)=v(T)+n=v(T')+n$.
For each $v$, 
%
\begin{equation}\label{eq1}
s_v(L^n) = s_{v-n-1}(L_0) + (n+1)s_{v-n}(L_{\infty}).
\end{equation}
Therefore,
\begin{equation}\label{eq2}
\sum_v (-1)^v\, s_v(L^n) = \sum_v (-1)^v\, s_{v-n-1}(L_0) + (n+1)\sum_v (-1)^v\, 
s_{v-n}(L_{\infty}).
\end{equation}
By (\ref{eq1}) with $n=0$, $s_v(L) = s_{v-1}(L_0) + s_{v}(L_{\infty})$. Hence 
\begin{equation}\label{eq3}
\sum_v (-1)^v\, s_v(L)= \sum_v (-1)^v\, s_{v-1}(L_0) + 
\sum_v (-1)^v\, s_v(L_{\infty}).
\end{equation}
Let $x=\sum_v (-1)^v\, s_{v-1}(L_0)$ and $y=\sum_v (-1)^v\, s_v(L_{\infty})$, so $\det(L_0)=|x|$ and $\det(L_{\infty})=|y|$. 
Since $L$ is quasi-alternating at $c$, $\det(L)=\det(L_0) + \det(L_{\infty})$, that is $|x+y|=|x|+|y|$. 
Therefore, $x\cdot y\geq 0$.
It now follows from (\ref{eq2}) for $n\geq 0$,
\begin{eqnarray}
\nonumber
 \left|\sum_v (-1)^v\, s_v(L^n)\right| &=& |(-1)^nx+(n+1)(-1)^ny|=|x|+(n+1)|y| \\
%
\label{eq5}
\det(L^n) &=& \det(L_0) + (n+1)\det(L_{\infty}). 
\end{eqnarray}

Let $c$ denote any crossing in $L^n$ added to $L$ as above.
Let $L^n_0$ and $L^n_{\infty}$ denote the corresponding resolutions of 
$L^n$ at $c$. We have $L^n_{\infty}=L_{\infty}$ and $L^n_0=L^{n-1}$ as links. 
For $n\geq 1$, (\ref{eq5}) implies 
\begin{eqnarray*}
\det(L^n) &=& \det(L_0) + (n+1)\det(L_{\infty})\\
 &=& \big(\det(L_0) + n\det(L_{\infty})\big) + \det(L_{\infty})\\
 &=& \det(L^{n-1}) + \det(L^n_{\infty})\\
 &=& \det(L^n_0) + \det(L^n_{\infty}).
\end{eqnarray*}

We have that $L^0=L$ and $L^n_{\infty}=L_{\infty}$ as links, and hence are
quasi-alternating.  If $L^{n-1}=L^n_0$ is quasi-alternating, then
$L^n$ is quasi-alternating by the equations above. By induction, $L^n$
is quasi-alternating at $c$, so $L^n$ is quasi-alternating at every
inserted crossing.

Suppose $n\leq 0$.  If $L$ is quasi-alternating at $c$ then the mirror
image $\tilde{L}$ is also quasi-alternating at $c$ since $\det(\tilde{L})=\det(L)$.
Applying the argument above to $\tilde{L}$, and then reflecting proves
this case for $L$.

Since every inserted crossing is quasi-alternating in $L^n$, we can
iterate the construction above.  Let $\tau=C(a_1,\ldots,a_m)$ be an
alternating rational tangle that extends $c$.  Since $\varepsilon(c)\cdot a_i
\geq 1$ for all $i$, $L^{a_i}$ is a vertical positive twist operation,
and $L^{-a_i}$ is a horizontal negative twist operation.  We now
construct:
$$L'=  \left(\left((L^{-a_m})^{a_{m-1}}\right)^{-a_{m-2}}\cdots\right)^{(-1)^{m} (a_1-\varepsilon(c))}.$$
For example, see Figure \ref{iteration}.  The resulting link $L'$ is
quasi-alternating, and is obtained from $L$ by replacing $c$ with
$C(a_1,a_2,\ldots,a_m)$.  \end{proof}

 \begin{figure}[t]
 \begin{center}
 \includegraphics[width=5in]{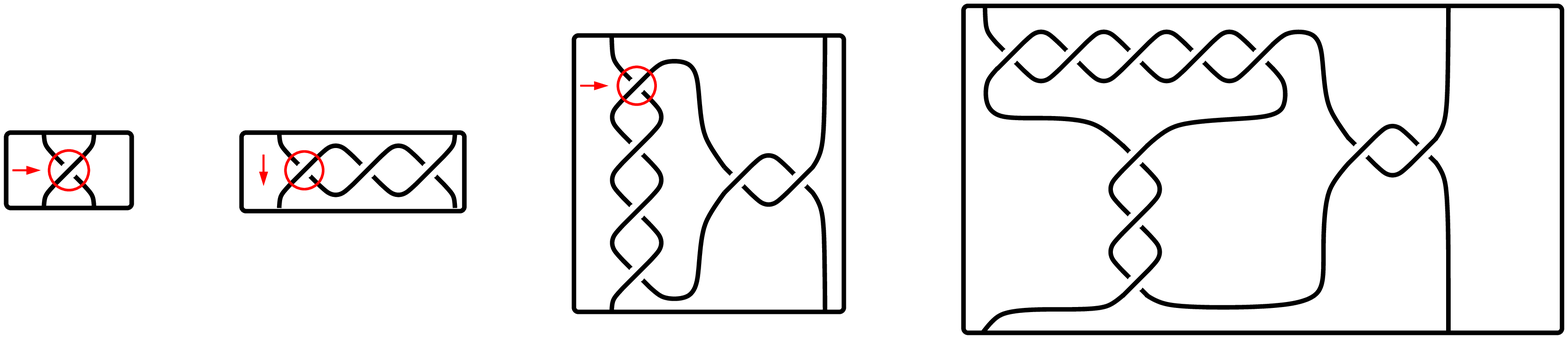}
\end{center}
 \caption{Crossing $c$ with $\varepsilon(c)=1$ is replaced by $C(5,3,2)$.  Steps shown are $L \to L^{-2} \to (L^{-2})^{3} \to
\left((L^{-2})^{3}\right)^{-4}$.}
 \label{iteration}
 \end{figure}

\ 

The following lemma will be used in the next section.

\begin{lemma} \label{connectsum}
If $K$ and $L$ are any quasi-alternating knot diagrams, then $K\# L$ is quasi-alternating. 
\end{lemma}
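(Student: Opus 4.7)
The natural approach is induction on a notion of quasi-alternating complexity of $L$, for instance the height $h(L)$ defined by $h(\text{unknot})=0$ and $h(L)\leq k+1$ iff there is a quasi-alternating crossing $c$ of $L$ with $h(L_0),h(L_\infty)\leq k$. We fix $K$ and prove the statement by induction on $h(L)$. For the base case, when $L$ is the unknot, $K\#L$ is just $K$, which is quasi-alternating by hypothesis.

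For the inductive step, suppose $L$ is quasi-alternating at a crossing $c$, with $L_0,L_\infty \in \mathcal{Q}$ and $\det(L)=\det(L_0)+\det(L_\infty)$. Regard the connect sum $K\#L$ as being performed on an arc disjoint from $c$, so that $c$ naturally sits as a crossing of $K\#L$. The key geometric observation is that smoothing and connect sum commute:
\begin{equation*}
(K\#L)_0 \;=\; K\#L_0, \qquad (K\#L)_\infty \;=\; K\#L_\infty.
\end{equation*}
By the inductive hypothesis applied to $L_0$ and $L_\infty$ (each of strictly smaller height than $L$), the links $K\#L_0$ and $K\#L_\infty$ are quasi-alternating.

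It then remains to verify the determinant identity at $c$, and here I would use the classical multiplicativity $\det(A\#B)=\det(A)\det(B)$:
\begin{equation*}
\det(K\#L) \;=\; \det(K)\det(L) \;=\; \det(K)\bigl(\det(L_0)+\det(L_\infty)\bigr) \;=\; \det(K\#L_0)+\det(K\#L_\infty).
\end{equation*}
Combined with the previous paragraph, this shows that $c$ is a quasi-alternating crossing of $K\#L$, completing the induction.

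The only subtle point, and the step I would spend the most care on, is ensuring that the connect sum is taken consistently as $L$ is resolved. Since $L_0$ and $L_\infty$ may be genuine links rather than knots, the connect sum of $K$ with $L_0$ (or $L_\infty$) is well defined only after fixing the component/arc at which gluing occurs. Using the diagrammatic picture, where $L$ is cut open along a short arc (away from $c$) and spliced into a corresponding arc of $K$, both resolutions inherit the same gluing arc, so $(K\#L)_0=K\#L_0$ and $(K\#L)_\infty=K\#L_\infty$ unambiguously. With this fixed, the induction goes through verbatim.
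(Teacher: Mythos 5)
Your proof is correct and is essentially the paper's argument: find a quasi-alternating crossing in one factor, observe that resolutions commute with the connect sum, and use $\det(A\#B)=\det(A)\det(B)$ to verify the determinant condition, closing the induction. The only difference is bookkeeping --- the paper inducts on $\det(K)$ (using $\det(K_0),\det(K_\infty)<\det(K)$) rather than on a height function, and both proofs implicitly extend the statement to connect sums with links since the resolutions $L_0,L_\infty$ need not be knots, a point you rightly flag.
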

\begin{proof} The proof is by induction on $\det(K)$.  For quasi-alternating $K$, if $\det(K)=1$ then $K$
is the unknot, so the result follows.  Otherwise, $K$ is
quasi-alternating at a crossing $c$, so the two smoothings at $c$,
$K_0$ and $K_{\infty}$, are quasi-alternating.

Since $\det(K)=\det(K_0)+\det(K_{\infty})$, both $\det(K_0)< \det(K)$ and $\det(K_{\infty}) < \det(K)$.
By induction, both $K_0\# L$ and $K_{\infty}\# L$ are quasi-alternating.
Moreover, 
\begin{eqnarray*}
\det(K\# L) &=& \det(K)\det(L) = \left(\det(K_0)+\det(K_{\infty})\right)\det(L) \\
 &=& \det(K_0)\det(L) + \det(K_{\infty})\det(L) \\
 &=& \det(K_0\# L) + \det(K_{\infty}\# L).
\end{eqnarray*}
Therefore, $K\# L$ is quasi-alternating at $c$.
\end{proof}

\section{Pretzel links}
\begin{minipage}{2.4 in}
  \begin{center}
    \includegraphics[width=1.75 in]{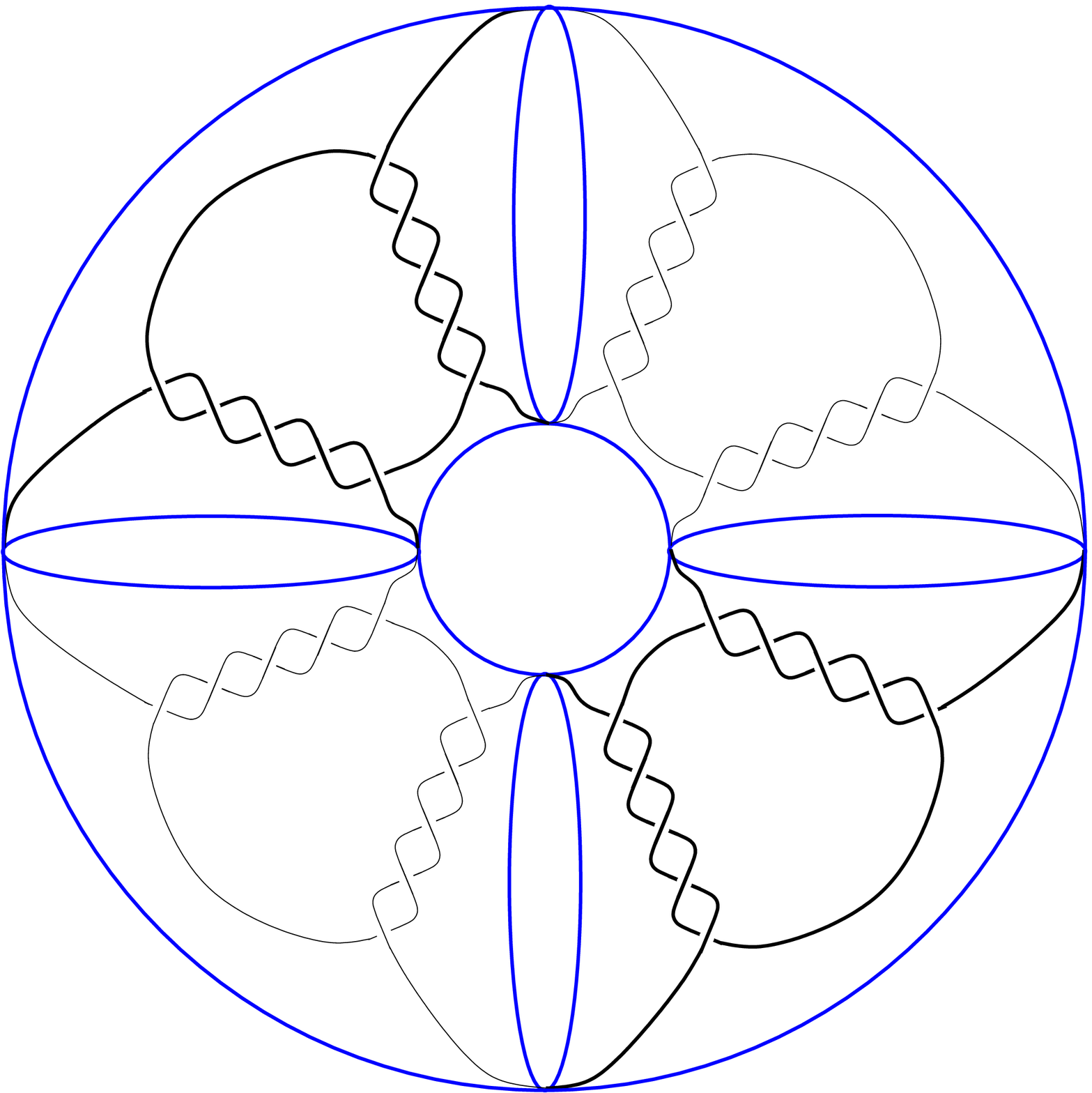}
  \end{center}
\end{minipage}
\begin{minipage}{2.4in}
  Alternating links are quasi-alternating, so we only consider
  non-alternating pretzel links. For all $p_i,\, q_j \geq 1$, let
  $P(p_1,\ldots,p_n, -q_1,\ldots, -q_m)$ denote the $(m+n)$--strand
  pretzel link.  As shown in figure on the left, the standard diagram
  of a non-alternating pretzel link can be made alternating on the
  torus and so has Turaev genus one.  
\end{minipage}
\\


\begin{prop}\label{pretzelKH}
  If $m,n \geq 2$ and all $p_i,\, q_j \geq 2$, then the Khovanov
  homology of $P(p_1,\ldots,p_n,-q_1,\ldots -q_m)$ has thickness exactly $3$. 
  More generally, any non-alternating Montesinos link, obtained 
by replacing $p_i>1$ (resp. $q_j>1$) half-twists with
a rational tangle that extends at least two of these crossings, also 
has thickness exactly $3$.
\end{prop}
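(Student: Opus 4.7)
The plan is to bracket the Khovanov thickness between $2$ and $3$. For the upper bound, the figure preceding the statement shows the standard diagram of $P(p_1,\ldots,p_n,-q_1,\ldots,-q_m)$ is alternating on a genus one Turaev surface, so the Turaev genus equals $1$, and the inequality $\text{thickness}\le\text{Turaev genus}+2$ recalled in the introduction gives thickness $\le 3$. The hypothesis of the Montesinos generalization (each replacement tangle is alternating and extends at least two of the original half-twist crossings) preserves the genus one Turaev surface structure, so the upper bound carries over.

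For the lower bound I would show Khovanov homology is not thin, which together with the upper bound forces thickness exactly $3$. The cleanest route is Shumakovitch's structure theorem for rationally thin links: the reduced Khovanov Poincar\'e polynomial must factor as $q^{s-1}(1+tq^2)f(tq^2)$ with $f\in\mathbb Z_{\ge 0}[tq^2]$, and in particular the total reduced rank equals the determinant. Combining this with the Kauffman bracket formula from \cite{cjp} and the pretzel determinant
\[ \det\!\bigl(P(p_1,\ldots,p_n,-q_1,\ldots,-q_m)\bigr)=\prod p_i \prod q_j \,\Bigl|\sum \tfrac{1}{p_i} -\sum \tfrac{1}{q_j}\Bigr|, \]
one verifies that the factorization fails whenever $m,n\ge 2$ and all $p_i,q_j\ge 2$, either through an alternating-sign obstruction on Jones coefficients or by showing the total rank exceeds the determinant. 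The Montesinos version follows because replacing a twist region by an alternating rational tangle modifies both the Jones polynomial and the determinant consistently (as in the proof of Theorem~\ref{maintheorem}), and the obstruction propagates.

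A skein-theoretic alternative is to induct using the Khovanov skein long exact sequence at a crossing in a twist region, with base case a small pretzel such as $P(2,2,-2,-2)$ whose Khovanov homology is known explicitly to occupy three diagonals; the bigrading in the long exact sequence then shows that the middle diagonal survives as $p_i$ and $q_j$ are increased, and that the insertion of an alternating rational tangle preserves this survival because the new crossings are quasi-alternating within the tangle and only shift the side diagonals.

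The main obstacle is the lower bound. The upper bound is essentially immediate from the Turaev genus computation, but confirming that the third diagonal is non-empty requires either a uniform polynomial obstruction (verifying Shumakovitch's factorization fails across the whole parameter family) or careful skein bookkeeping to rule out cancellations in the long exact sequence. I expect the cleanest write-up to proceed by the Turaev genus upper bound plus a direct Jones-polynomial-plus-determinant obstruction to thinness that is uniform in $m,n,p_i,q_j$.
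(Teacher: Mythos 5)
Your upper bound is exactly the paper's: the standard diagram is alternating on a genus one Turaev surface, and the bound thickness $\le$ Turaev genus $+\,2$ from \cite{dkh, Manturov} gives thickness $\le 3$, with the rational-tangle replacement preserving the Turaev surface. The gap is in the lower bound, which you correctly identify as the main obstacle but do not actually close. The paper's argument is much more direct than either of your proposed routes: for $m,n\ge 2$ and $p_i, q_j\ge 2$ the standard pretzel diagram is \emph{adequate}, and Proposition 5.1 of Khovanov's paper \cite{KhPatterns} then shows the two extreme Khovanov homology groups are nonzero; for a non-alternating adequate diagram their $\delta$-gradings are forced apart by more than one, so the homology is thick. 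Since adequacy (like Turaev genus) is preserved when a twist region is replaced by a rational tangle extending at least two of its crossings, the same argument covers the Montesinos generalization. No Jones polynomial or determinant computation is needed.

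Both of your proposed substitutes for this step are problematic as stated. The Shumakovitch/Jones-polynomial route requires verifying, uniformly in $m,n,p_i,q_j$, that the Poincar\'e-polynomial factorization (equivalently, an alternating-sign condition on Jones coefficients, or total rank exceeding the determinant) fails; you assert this verification rather than perform it, and it is not obviously uniform --- the Jones-polynomial obstruction is only a sufficient condition for thickness, so even if the homology is thick the polynomial could in principle pass the thinness test for some parameter values, in which case your argument would say nothing. The skein long exact sequence route faces the standard difficulty that exactness alone does not prevent the middle-diagonal classes from cancelling as you increase $p_i$ or $q_j$; ruling this out is precisely the hard part, and ``careful bookkeeping'' is not a proof. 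You should replace the lower-bound argument with the adequacy observation, which is a purely diagrammatic check and propagates automatically to the Montesinos case.
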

\begin{proof} It is easy to check that for $n,\, m \geq 2$ and $p_i,\, q_j \geq
2$, the standard diagram for $P(p_1,\ldots,p_n,-q_1,\ldots,-q_m)$ is
adequate.  By Proposition 5.1 of \cite{KhPatterns}, it has thick Khovanov
homology.  Since the Turaev genus of non-alternating pretzel links is
one, the thickness bounds for Khovanov homology given in \cite{dkh,
  Manturov} are achieved by pretzel links whose Khovanov homology is
thick.  Since adequacy (and Turaev genus) is preserved after replacing
any half-twists with a rational tangle that extends at least two of these crossings,
the same argument applies to these Montesinos links.
\end{proof}
Similarly, it follows that every non-alternating Montesinos link (see
\cite{BZ} for their classification) has Turaev genus one.  However, some
of these have thin Khovanov homology.


\begin{theorem}\label{pretzeltheorem} \ 
\begin{enumerate}
\item For $n\geq 1$, $p_i \geq 1$ for all $i$, and  $q > \min(p_1,\ldots,p_n)$,
the pretzel link $P(p_1,\ldots,p_n, -q)$ is quasi-alternating. 
\item For $m,n \geq 2$ and all $p_i,\, q_j \geq 2$, the pretzel link $P(p_1,\ldots,p_n,-q_1,\ldots -q_m)$ is not quasi-alternating.
\end{enumerate}
Both statements are true for all permutations of $p_i$'s and $q_j$'s, and reflections of all these pretzel links. 
\end{theorem}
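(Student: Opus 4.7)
The plan is to prove the two parts by different methods.

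Part (2) will follow from Proposition \ref{pretzelKH}. Quasi-alternating links are homologically thin (Khovanov thickness at most two), as noted in the introduction. Since Proposition \ref{pretzelKH} establishes that $P(p_1,\ldots,p_n,-q_1,\ldots,-q_m)$ with $m,n \geq 2$ and all $p_i, q_j \geq 2$ has Khovanov thickness exactly three, these pretzels cannot be quasi-alternating. Permutations and reflections preserve Khovanov thickness, so the conclusion extends to all stated symmetries.

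For Part (1), the plan is to induct on the total crossing number $N = p_1 + \cdots + p_n + q$. The base case $n = 1$ is immediate since $P(p_1, -q)$ is a 2-bridge link, hence alternating and quasi-alternating. For the inductive step with $n \geq 2$, WLOG take $p_1 = \min p_i$, so $q > p_1$. The first strategy is to smooth a crossing in the $-q$ column, giving resolutions $P(p_1,\ldots,p_n)$ (alternating since all entries are positive, hence quasi-alternating) and $P(p_1,\ldots,p_n,-(q-1))$, which satisfies the inductive hypothesis whenever $q \geq p_1 + 2$. The determinant identity $\det(L) = \det(L_0) + \det(L_\infty)$ will follow from the pretzel formula $\det(P(a_1,\ldots,a_N)) = \bigl|\sum_i \prod_{j \neq i} a_j\bigr|$, with signs aligning because $q > p_1 \geq P/S$ (where $P = \prod p_i$ and $S = \sum_i \prod_{j \neq i} p_j$) forces $P - qS$ to be strictly negative throughout.

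The main obstacle is the boundary case $q = p_1 + 1$, where the $L_\infty$ above fails the strict inequality in the inductive hypothesis. Here we will instead smooth a crossing in some positive column $p_k$ with $k$ chosen so that removing column $k$ preserves the minimum; such $k$ exists since $n \geq 2$. The two resolutions then become $P(p_1,\ldots,\widehat{p_k},\ldots,p_n,-q)$ (with $p_1$ still the minimum and fewer total crossings) and $P(p_1,\ldots,p_k-1,\ldots,p_n,-q)$ (one fewer crossing, with new minimum either $p_1$ or $p_k-1$, still strictly less than $q$), both captured by the induction. When $p_k = 1$, the latter is no longer a pretzel but a connect sum of a smaller alternating pretzel with a torus link, and is quasi-alternating by Lemma \ref{connectsum}. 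The determinant identity in each subcase is verified directly from the pretzel formula and the same sign analysis.
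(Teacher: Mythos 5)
Your proof is correct, but for part (1) it takes a genuinely different route from the paper. The paper inducts on the number of positive strands $n$: it forms $L=P(p_1,\ldots,p_n,1,-q)$, shows the single crossing on the new strand is quasi-alternating (its two resolutions being the connected sum $T(2,p_1)\#\cdots\#T(2,p_n)\#T(2,-q)$, handled by Lemma \ref{connectsum}, and $P(p_1,\ldots,p_n,-q)$, handled by induction), and then invokes Theorem \ref{maintheorem} to twist that crossing up into a column of $p_{n+1}$ crossings. You instead induct on the total crossing number and resolve crossings inside the existing columns, never using Theorem \ref{maintheorem} at all; the price is the case split at $q=p_1+1$, where $P(p_1,\ldots,p_n,-(q-1))$ falls outside the inductive hypothesis (and indeed can fail to be quasi-alternating, e.g.\ $P(2,2,-2)$), and where you correctly switch to resolving in a positive column chosen to preserve the minimum, falling back on Lemma \ref{connectsum} when $p_k=1$ produces a $0$-column. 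I checked your sign analysis: with $P=\prod p_i$ and $S=\sum_i\prod_{j\neq i}p_j$ one has $P/S=1/\sum_i(1/p_i)\leq\min_i p_i$, so $q>\min_i p_i$ gives $qS-P>0$ and $(q-1)S-P\geq 0$, and the determinant additivity holds in every subcase. What each approach buys: the paper's is shorter and showcases its main theorem; yours is more self-contained and elementary, at the cost of the boundary-case analysis. Part (2) is identical to the paper's argument via Proposition \ref{pretzelKH}. One small addition you should make: for part (1) you address permutations and reflections only implicitly; as in the paper, note that the argument is symmetric in the $p_i$ and that mirror images preserve determinants and quasi-alternation, so reflections follow.
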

\begin{proof} 
We first prove part (1) by induction on $n$.  For
$n=1$, $P(p_1,-q) = T(2,p_1-q)$, which is quasi-alternating since
$p_1-q\neq 0$.

Let $n\geq 1$ and $p_i \geq 1$ for all $i$.  Suppose for $q >
\min(p_1,\ldots,p_n),\ P(p_1,\ldots,p_n, -q)$ is quasi-alternating.
Consider $P(p_1,\ldots,p_n,p_{n+1}, -q)$.  Without loss of generality,
$q > p_{i_0}$ for some $i_0$, with $1\leq i_0 \leq n$.
%
Let $L=P(p_1,\ldots,p_n,1,-q)$, and let $c$ be the crossing on the
$(n+1)$st strand.  Let $T(2,k)$ denote the alternating torus link.
Then $L_0=T(2,p_1)\# \ldots \# T(2,p_n)\#T(2,-q)$
is quasi-alternating by Lemma \ref{connectsum}, and
$L_{\infty}=P(p_1,\ldots,p_n,-q)$ is quasi-alternating by the
induction hypothesis.

By Lemma 4.3 of \cite{DFKLS2}, 
$${\rm det}(P(p_1,\ldots, p_n,-q_1,\ldots,-q_m)) = 
\left|\prod_{i=1}^n p_i \prod_{j=1}^m q_j \left(\sum_{i=1}^n \frac{1}{p_i} - 
\sum_{j=1}^m \frac{1}{q_j}\right)\right|.$$
Now, $q>p_{i_0}$ implies $\displaystyle{\frac{1}{p_{i_0}}-\frac{1}{q}>0}$, so 
$$q \prod_{i=1}^n p_i \left(1+\sum_{i\neq i_0}\frac{1}{p_i}+\left(\frac{1}{p_{i_0}}-\frac{1}{q}\right)\right)
=q \prod_{i=1}^n p_i \left(\sum_{i\neq i_0}\frac{1}{p_i}+\left(\frac{1}{p_{i_0}}-\frac{1}{q}\right)\right)
+ q\prod_{i=1}^n p_i.$$
Therefore, $\det(L)=\det(L_0)+\det(L_{\infty})$, which proves that
$L=P(p_1,\ldots,p_n,1,-q)$ is quasi-alternating at $c$. By Theorem
\ref{maintheorem}, $L=P(p_1,\ldots,p_n,p_{n+1},-q)$ is quasi-alternating
for all $p_{n+1} \geq 1$.  This completes the proof of part (1) by induction.

By Proposition \ref{pretzelKH} the pretzel links in part (2) have thick Khovanov homology, so they are not quasi-alternating.
%

The arguments above remain essentially unchanged for all permutations of $p_i$'s
and $q_j$'s, and for all reflections, given by negating every $p_i$
and $q_j$.  
\end{proof}

\begin{remark}\rm
  Widmer \cite{Widmer} extended Theorem \ref{pretzeltheorem} to certain Montesinos links.
\end{remark} 

\subsection{$3$--strand pretzel links}\label{3strands}
Theorem \ref{pretzeltheorem} still leaves open the quasi-alternating
status of many pretzel links.  For certain $3$--strand pretzel links,
this can be deduced from previous results.  As above, the statements
below for $P(p_1,p_2,-q)$ are true for all permutations of $p_i$'s and
$q$, and reflections of all these pretzel links.

According to \cite{Josh_Greene, Lisca_Stipsicz}, for the pretzel link $L=P(p_1,p_2,-q)$
with $p_1, p_2, q\geq 2$, $\Sigma(L)$ is an {\rm L}--space if and only if
\begin{enumerate}
\item $q \geq \min\{p_1,p_2\}$; or
\item $q = \min\{p_1,p_2\}-1\ $ and $\ \max\{p_1,p_2\}\leq 2q+1$.
\end{enumerate}
By Proposition \ref{Lprop}, $3$--strand pretzel links that satisfy
neither (1) nor (2) above are not quasi-alternating.  
%
%
Together with Theorem \ref{pretzeltheorem}(1), this leaves open
the quasi-alternating status for the following $3$--strand pretzel links:
%
%
 \begin{question}\
   \begin{enumerate}
   \item For $p \geq q \geq 2$, is $P(p,q,-q)$ quasi-alternating?
 \item For $ 2q +1 \geq  p \geq q+1 \geq 3$, is $P(p, q+1, -q)$ quasi-alternating?
   \end{enumerate}
   \end{question}
   Note that $P(3,3,-3)= 9_{46}$ and $P(4,3,-3)=10_{140}$.  In
   addition, thick Khovanov homology or knot Floer homology precludes
   some of these links from being quasi-alternating (see \cite{OSz_2004,
     Eftekhary, Suzuki}).  For example, $P(k,2,-2)$ has thick Khovanov
   homology for $2 \leq k \leq 5$ according to KhoHo \cite{KhoHo}.

\section{Quasi-alternating knots up to 10 crossings}
\label{10crossqa}

Manolescu \cite{Manolescu08} showed that all KH-thin knots up to 9 crossings,
except $9_{46}$ are quasi-alternating. Among 42 non-alternating
10-crossing knots, 32 are KH-thin.  Baldwin \cite{Baldwin08} showed
that 10 knots among these, which are closed $3$--braids, are
quasi-alternating.  Greene \cite{Josh_Greene} showed that eight more
knots $10_{150}$,$10_{151}$,$10_{156}$, $10_{158}$, $10_{163}$,
$10_{164}$, $10_{165}$, $10_{166}$ are quasi-alternating.  We show
that except for $10_{140}$, the remaining 13 knots are
quasi-alternating.  In the table below, we give the knot and its
Conway notation (see \cite{Kawauchi}).  For our computations, we replaced
the rational tangle indicated in bold with a crossing of same sign,
and checked that this crossing is quasi-alternating in the new
diagram. It follows from Theorem \ref{maintheorem} that the knot is
quasi-alternating.

\begin{center}
\begin{tabular}{|c|c||c|c||c|c|}
\hline
$10_{129}$ & $\textbf{32}, 21, 2-$ & $10_{135}$ & $221, \textbf{21}, 2-$ &
$10_{146}$ & $22, \textbf{21}, 21-$ \\
\hline 
$10_{130}$ & $311, \textbf{3}, 2-$ & $10_{137}$ & $\textbf{22}, 211, 2-$ &
$10_{147}$ & $211, \textbf{3}, 21-$ \\
\hline
$10_{131}$ & $311, \textbf{21}, 2-$ & $10_{138}$ & $\textbf{211}, 211, 2-$ &
$10_{160}$ & $-30: \textbf{20}: 20$ \\
\hline
$10_{133}$ & $\textbf{23}, 21, 2-$ & $10_{142}$ & $31, \textbf{3}, 3-$ & & \\
\hline
$10_{134}$ & $221, \textbf{3}, 2-$ & $10_{144}$ & $31, \textbf{21}, 21-$ & & \\
\hline
\end{tabular}
\end{center}

As a result, all KH-thin knots up to 10 crossings, except $9_{46}$ and
$10_{140}$, are quasi-alternating.  Shumakovitch \cite{Shumakovitch}
informed us that $9_{46}$ and $10_{140}$ have thick odd Khovanov
homology, so they are not quasi-alternating.

\subsection*{Acknowledgements}
We thank Peter Ozsv\'ath for useful discussions, which included the
proof of Lemma \ref{connectsum}.  We thank Josh Greene for very helpful discussions
about results in Sections \ref{3strands} and \ref{10crossqa}.  We
thank Alex Shumakovitch for informing us about $9_{46}$ and
$10_{140}$.  We thank the anonymous referee for thoughtful comments.

\bibliography{qatwist}
\bibliographystyle{amsplain}
\end{document}